\providecommand{\U}[1]{\protect\rule{.1in}{.1in}}
\newtheorem{theorem}{Theorem}[section]
\newtheorem{definition}[theorem]{Definition}
\newtheorem{lemma}[theorem]{Lemma}
\newtheorem{proposition}[theorem]{Proposition}
\newtheorem{remark}[theorem]{Remark}
\newenvironment{proof}[1][Proof]{\noindent\textbf{#1.} }{\ \rule{0.5em}{0.5em}}
\DeclareMathOperator*{\argmax}{arg\,max}
\begin{document}

\title{Optimal installation of renewable electricity sources: the case of Italy}

\author{Almendra Awerkin\thanks{{\tt awerkin@math.unipd.it}}}
\author{Tiziano Vargiolu\thanks{{\tt vargiolu@math.unipd.it}}}
\affil{Department of Mathematics "Tullio Levi-Civita"\\
  University of Padua}

\maketitle

\begin{abstract} 
Starting from the model in \cite{KV}, we test the real impact of current renewable installed power in the electricity price in Italy, and assess how much the renewable installation strategy which was put in place in Italy deviated from the optimal one obtained from the model in the period 2012--2018. To do so, we consider the Ornstein-Uhlenbeck (O-U) process, including an exogenous increasing process influencing the mean reverting term, which is interpreted as the current renewable installed power. 
We estimate the parameters of this model by using real data of electricity prices and energy production from photovoltaic and wind power plants from the six main Italian price zones. We obtain that the model fits well the North, Central North and Sardinia zones: among these zones, the North is impacted by photovoltaic production, Sardinia by wind and the Central North does not present significant price impact. 
Then we implement the solution of the singular optimal control problem of installing renewable power plants
, in order to maximize the profit of selling the produced energy in the market net of installation costs. We extend the results of \cite{KV} to the case when no impact on power price is presented, and to the case when $N$ players can produce electricity by installing renewable power plants.  To this extent, we analyze both the concepts of Pareto optima and of Nash equilibria. For this latter, we present a verification theorem in the 2-player case, and an explicit characterization of a Nash equilibrium in the no-impact case. 
We are thus able to describe the optimal strategy and compare it with the real installation strategy that was put in place in Italy. 
\end{abstract} 

\noindent {\textbf{Keywords}}: singular stochastic control; irreversible investment; variational inequality; singular stochastic games; Nash equilibria;  Ornstein-Uhlenbeck process; market impact; ARX model; Pareto optimality. 

\section{Introduction}

The paper \cite{KV} describes the irreversible installation problem of photovoltaic panels for an infinitely-lived profit maximizing power-producing company, willing to maximize the profits from selling electricity in the market. The power price model used in that paper assumes that the company is a large market player, so its installation has a negative impact on power price. More in detail, the power price is assumed to follow an additive mean-reverting process (so that power price could possibly be negative, as it happens in reality), where the long-term mean decreases as the cumulative installation increases. The resulting optimal strategy is to install the minimal capacity so that the power price is always lower than a given nonlinear function of the capacity, which is characterized by solving an ordinary differential equation deriving from a free-boundary problem. 
The aim of this paper is to validate empirically that model, extending it also to wind power plants' installation, by using time series of the main Italian zonal prices and power production, and to assess how much the renewable installation strategy in Italy deviated from the optimal one obtained by \cite{KV} in the period 2012--2018. In doing so, we also extend the theoretical results of \cite{KV} to the (easier) case when the amount of installed renewable capacity has no impact on power prices, and to the case when several power producers are present in the market: in doing so, we investigate and compare Pareto optima and Nash equilibria in this situation.

It is common in  literature to model electricity prices via a mean-reverting behaviour, and to include (jump) terms representing the seasonal fluctuations and daily spikes, cf. \cite{Borovkova,CarFig,German,Weron} among others. Here, in analogy with \cite{KV}, we do not represent the spikes and seasonal fluctuations with the following argument: the installation time of solar panels or wind turbines usually takes several days or weeks, which makes the power producers indifferent of daily or weekly spikes. Also, the high lifespan of renewable power plants and the underlying infinite time horizon setting allow us to neglect the seasonal patterns. We therefore assume that the electricity's fundamental price has solely a mean-reverting behavior, and evolves according to an Ornstein-Uhlenbeck (O-U) process\footnote{We allow for negative prices by modelling the electricity price via an Ornstein-Uhlenbeck process. Indeed, negative electricity prices can be observed in some markets, for example in Germany, cf. \cite{NYTimes}.}. We are also neglecting the stochastic and seasonal effects of renewable power production. In fact, photovoltaic production has obvious seasonal patterns (solar panels do not produce power during the night and produce less in winter than in summer), and both solar and wind power plants are subject to the randomness affecting weather conditions. However, since here we are interested to a long-term optimal behaviour, we interpret the average electricity produced in a generic unit of time as proportional to the installed power. All of this can be mathematically justified if we interpret our fundamental price to be, for example, a weekly average price as e.g. in \cite{BPPB,FonVarZor,GPP}, who used this representation exactly to get rid of daily and weekly seasonalities. 

In order to represent price impact of renewables in power prices, which is more and more observed in several national power markets, we follow the common stream in literature (also in analogy with \cite{KV}) and represent renewable capacity installation as a non-decreasing process, thus resulting in a singular control problem. This is also analogous to other papers modelling price impact: for example, in problems of optimal execution, \cite{Becherer2} and \cite{Becherer} take into account a multiplicative and transient price impact, whereas \cite{guo} considers an exponential parametrization in  a geometric Brownian motion setting allowing for a permanent price impact. Also, a price impact model has been studied by \cite{Zervos}, motivated by an irreversible capital accumulation problem with permanent price impact, and by \cite{KF}, in which the authors consider an extraction problem with Ornstein-Uhlenbeck dynamics and transient price impact. 
In all of the aforementioned papers on price impact models dealing with singular stochastic controls \cite{Zervos,Becherer2,Becherer,KF,guo}, the agents' actions can lead to an immediate jump in the underlying price process, whereas in our setting, it cannot. 
Our model is instead analogous to \cite{CFSV,CarJai}, which show how to incorporate a market impact due to cross-border trading in electricity markets, and to \cite{RVG}, which models the price impact of wind electricity production on power prices. In these latter models, price impact is localized on the drift of the power price. 

In order to validate our model, we use a dataset of weekly Italian prices, together with photovoltaic and wind power production, of the six main Italian price zones (North, Central North, Central South, South, Sicily and Sardinia), covering the period 2012--2018. In principle, both photovoltaic and wind power production could have an impact on power prices, so we start by estimating parameters of an ARX model where both photovoltaic and wind power production are present as exogenous variables: the parameters of this discrete time model will then be transformed in parameters for the continuous time O-U model by standard techniques, see e.g. \cite{brigo}. Unfortunately, for three price zones we find out that our O-U model, even after correcting for price impacts, produce non-independent residuals. This is an obvious indication that the O-U model is too simple for these zones, and one should instead use more sophisticated models, like CARMA ones (see e.g. \cite{Benth}): we leave this part for future research. For the remaining three zones, we find out that, for each zone, at most one of the two renewable sources has an impact: in particular, power price in the North is only impacted by photovoltaic production, and in Sardinia only by wind production, while in Central North is not impacted by any of them. Thus, we are able to model the optimal installation problem for North and Sardinia using the theory existing in \cite{KV}. Instead, for the installation problem in Central North, we must solve an instance of the problem with no price impact: this can  be derived as a particular case of the results in \cite{KV},  and  results in a much more elementary formulation than the general case in \cite{KV}. More in detail, we obtain that the function of the capacity which should be hit by the power price in order to make additional installation is in this case equal to a constant, obtained by solving a nonlinear equation. The corresponding optimal strategy should thus be to not install anything until the price threshold is hit, and then to install the maximum possible capacity. 

The second aim of our paper is to check the effective installation strategy, in the different price zones, against the optimal one obtained theoretically. In doing so, we must take into account the fact that the Italian market is liberalized since about two decades, thus there is not a single producer which can impact prices by him/herself, but rather prices are impacted by the cumulative installation of all the power producers in the market. We thus extend our model by formulating it for $N$ players who can install, in the different price zones, the corresponding impacting renewable power source, monotonically and independently of each other: the resulting power price will be impacted by the sum of all these installations, while each producer will be rewarded by a payoff corresponding to their installation. The resulting $N$-player nonzero-sum game can be solved with different approaches. 
A formulation requiring a Nash equilibrium would result in a system of $N$ variational inequalities with $N + 1$ variables (see e.g. \cite{DeAFer} and references therein), which would be quite difficult to treat analytically. 
We choose instead to seek for Pareto optima first. One easy way to achieve this is to assume, in analogy with \cite{ferrari}, the existence of a "social planner" which maximizes the sum of all the $N$ players' payoffs, under the constraint that the sum of their installed capacity cannot be greater than a given threshold (which obviously represents the physical finite capacity of a territory to support power plants of a given type). We prove that, in our framework, this produces Pareto optima. More in detail, by summing together all the $N$ players' installations in the social planner problem, one obtains the same problem of a single producer, which has a unique solution that represents the optimal cumulative installation of all the combined producers. Though with this approach it is not possible to distinguish the single optimal installations of each producer, we can assess how much the effective cumulative installation strategy which was carried out in Italy during the dataset's period differs from the optimal one which we obtained theoretically.  To give an idea of what we instead would get when searching for Nash equilibria, we present the case $N = 2$ and formulate a verification theorem that the value functions of each player should satisfy. Here we want to point out a difference which arises in our problem with respect to the current stream of literature. In fact, in stochastic singular games the usual framework is that a player can act only when the other ones are idle, see e.g. \cite{CGX,DeAFer,GTX,Xin}. Here instead we take explicitly into consideration the possibility that both players acts (i.e. install) simultaneously. 
This possibility will be confirmed in Section 5.3, where (in the case with no market impact) we present a Nash equilibrium where both players install simultaneously. Another peculiarity is that this equilibrium induces the players to install {\em before} than when they would have done under a Pareto optimum. This is the converse phenomenon of what observed e.g. in \cite{CGX}, where instead players following a Nash equilibrium act later than players following a Pareto optimum. 

The paper is organized as follows. Section 2 presents the continuous time model used to characterize the evolution of the electricity price influenced by the current installed power and presents the procedure for parameter estimation. In Section 3 the model is estimated using real Italian data and the pertinent statistical tests are applied for the validation of the model. Section 4 presents the set up for the singular control problem and its analytical solution, in both the cases with impact and with no impact, for a single producer. Section 5 extends these results to the case when $N$ players can install renewable capacity and derives corresponding Pareto optima  in Subsection 5.1, while Subsections 5.2 and 5.3 are devoted to Nash equilibria and the comparison between the two approaches. Section 6 compares the analytical optimal installation strategy obtained in Section 4 with the real installation strategy applied in Italy. Finally, Section 7 presents our conclusions.

\section{The model}

We start by presenting the model introduced in \cite{KV}, which we here extend to more than one renewable electricity source. 

We assume that the fundamental electricity price $S^{x}(s)$, in absence of increments on the level of renewable installed power, evolves accordingly to an Ornstein-Uhlenbeck (O-U) process

\begin{gather}
\begin{cases}
dS^{x}(s)=  \kappa \left( \zeta   - S^{x}(s) \right)ds + \sigma dW(s) \mbox{ $ s > 0 $ } \\
S^{x}(0) = x
\end{cases},
\label{OU}
\end{gather}

\noindent for some constants $\kappa, \sigma,  x > 0$ and $\zeta \in \mathbb{R}$, where $(W(s))_{s \geq 0}$ is a standard Brownian motion defined on a filtered probability space $\left( \Omega, \mathcal{F}, \mathbb{P}\right)$, more rigorous definition and detailed assumptions will be given in the next section.

We represent the increment on the current installed power level with the sum of increasing processes $Y^{y_{i}}_{i}$, where $y_i$ is the initial installed power and the index $i$ stands for the renewable power source type, which in our case are sun and wind. We relate  $Y^{y_{1}}_1$ with solar energy and $ Y^{y_{2}}_2$ with wind energy. We assume that the increment in the current renewable installed power affects the electricity price by reducing the mean level instantaneously at time $s$ by $ \sum_{i = 1}^{2} \beta^{i} Y^{y_{i}}_i (s) $ for some $\beta^{i} > 0$ \cite{KV}, with $i \in \{ 1, 2\}$. Therefore the spot price $S^{x,I}(s)$ evolves according to  

\begin{gather}
\begin{cases}
dS^{x,I}(s)  = \kappa( \zeta - \sum_{i= 1,2} \beta^{i} Y^{y_{i}}_i (s) - S^{x,I}(s))ds + \sigma dW(s)\mbox{ $s > 0 $ } \\
S^{x,I}(0) = x.
\end{cases}
\label{OUI1}
\end{gather}

\noindent The explicit solution of \eqref{OUI1} between two times $\tau$ and $t$, with $0 \leq \tau < t$ is given by

\begin{eqnarray}
S^{x,I}(t) & = & e^{\kappa (\tau - t)} S^{x,I}(\tau) + \kappa \int_{\tau}^{t} e^{\kappa (s-t)}\left(\zeta - \sum_{i= 1,2} \beta^{i} Y^{y_{i}}_{i}(s) \right) ds + \int_{\tau}^{t} e^{\kappa(s-t)}\sigma dW(s)\\
& = &  e^{\kappa (\tau - t)} S^{x,I}(\tau) +  \zeta (1 - e^{\kappa (\tau - t)} ) - \kappa \int_{\tau}^{t} e^{\kappa(s- t)} \sum_{i= 1,2} \beta^{i} Y^{y_{i}}_{i}(s)ds + \int_{\tau}^{t} e^{\kappa(s-t)}\sigma dW(s).
\label{OUI2}
\end{eqnarray}

\noindent The discrete time version of \eqref{OUI2}, on a time grid $0 = t_0 < t_1 < \ldots $, with constant time step $\Delta t = t_{n+1} - t_{n}$ results in the ARX(1) model 

\begin{eqnarray}
X(t_{n+1})  = 
a + b X(t_{n}) +  \sum_{i = 1,2} u^{i}  Z^{i}(t_{n}) 
+ \delta \epsilon(t_{n}). 
\label{OUD}
\end{eqnarray} 

\noindent where $X(t_{0}), X(t_{1}), X(t_{2}), \ldots $ and $Z^{i}(t_{0}), Z^{i}(t_{1}), Z^{i}(t_{2}), \ldots $ are the observation on the time grid, of process $S^{x,I}$ and $Y^{y_{i}}_{i}$ respectively. The random variables $(\epsilon(t_{n}))_{n = \{0, \ldots , N \}} \sim \mathcal{N}(0,1)$ are iid and the coefficients $a$, $b$, $u^1$, $u^2$ and $\delta$ are related with $\kappa$, $\zeta$, $\beta^1$, $\beta^2$ and $\sigma$ by


\begin{gather}
\begin{cases}
a  =  \zeta (1 - e^{-\kappa \Delta t}) \\
b  =  e^{- \kappa \Delta t} \\
u^{1}  =  - \beta^{1} (1 - e^{-\kappa \Delta t}) \\
u^{2}  =  - \beta^{2} (1 - e^{-\kappa \Delta t}) \\
\delta  =  \frac{\sigma \sqrt{1- e^{-2 \kappa \Delta t}}}{\sqrt{2 \kappa}}
\end{cases}.
\label{parameters}
\end{gather}

\noindent The estimation of the discrete time parameters $a$, $b$, $\delta$ and $u_{i}$, $i = 1,2$ can be obtained from ordinary least squares, which gives maximum likelihood estimators. Then, the continuous time parameters $\kappa$, $\zeta$, $\sigma$ and $\beta^{i}$ with $i= 1,2$ can be estimated by solving Equations \eqref{parameters} \cite{brigo}.

\section{Parameter estimation for Italian zonal prices}

In this section we estimate the parameters of the model in Equation \eqref{OUD} using real Italian data of energy price and current installed power.

\subsection{The dataset}

We have data from six main price zones of Italy, which are North, Central North, Central South, South, Sicily and Sardinia. For every zone we have weekly measurements of average energy price in \euro /MWh, together with photovoltaic and wind energy production in MWh. The time series goes from 07/05/2012 to 25/06/2018, week 19/2012 to 26/2018, corresponding to $N = 321$ observations. The time series of current photovoltaic and wind installed power is instead available with a much lower frequency (i.e. year by year). In order to obtain a time series consistent with the weekly granularity of price and production, we estimate the installed power to be proportional to  the running maximum of the photovoltaic and wind energy production of whole Italy, respectively. Summarizing, we use for estimation of the model in Equation \eqref{OUD}, for every particular zone, the data summarized in Table \ref{TD}.

\begin{table}[H]
\centering
\begin{tabular}{| p{2cm} | c | p{9.5cm} |}
\hline
Variable Type & Nomenclature & Description\\
\hline
Time step observation	& $t_{1}, \ldots, t_N$ 	& Weeks when the quantities are observed, $N = 321$. \\
\hline
Response variable & $X(t_0 ), \ldots, X(t_{N})$ & Electricity price in \euro/MWh  relative to an Italian price zone.\\
\hline
Explanatory variable & $Z^{1}(t_0),\ldots,Z^{1}(t_{N})$ & Current  installed photovoltaic power in MW, estimated as 
$Z^{1}(t_{i}) = \max(E^{1}(t_0),\ldots, E^{1}(t_{i}))$, $i \in \{1, \ldots, N \}$, where $E^{1}(t_i)$ is the sum of the produced energy on the six zones at the observation time $t_i$.\\
\cline{2-3}
 & $Z^{2}(t_0),\ldots,Z^{2}(t_{N})$ & Current  installed wind power in MW, estimated as 
 $Z^{2}(t_{i}) = \max\{ E^{2}(t_0), \ldots, E^{2}(t_{i}) \}$, $i \in \{1, \ldots, N \}$, where $E^{1}(t_i)$ is the sum of the produced energy on the six zones at the observation time $t_i$.\\
\hline
\end{tabular}
\caption{The data used for parameter estimation of Equation \eqref{OUD}.}
\label{TD}
\end{table}

\subsection{Results}

Using ordinary least squares considering the data described above and then setting $\Delta t =  t_{i+1} - t_{i} = \frac{1}{52}$  for all $i = 0,\ldots,320$, we obtain, by Equations \eqref{parameters}, the continuous time parameters for the O-U model with an exogenous impact in the mean reverting term, for every zone. Table \ref{T1} shows the estimation results by zone. 

\begin{table}[H]
\centering
\begin{footnotesize}
\begin{tabular}{| c | l | r | r | r | r | r |  r | }
\hline
Zone &  \multicolumn{6}{c |}{parameters} & Box Pierce test \\
\cline{2-7}
 &  & $\kappa$ & $\zeta$ & $\beta^1 $  & $\beta^2 $ & $\sigma$ & $p$-value\\
\hline 
\multirow{2}{*}{North}& Value & *** 10.6056  & *** 133.0670 & *  0.0148 & 0.0012 & *** 47.7527 &   0.6101 \\
    & s.e. & 2.1437  & 32.2392 &  0.0082 & 0.0031 &  2.3741 & \\ 
\hline
\multirow{2}{*}{Central North}& Value & *** 10.9960 & *** 120.4933  &   0.0112 & 0.0027 & *** 45.5106 & 0.2702 \\
    & s.e. & 2.1599 & 30.1593 & 0.0076 & 0.0029 & 2.1413 & \\
\hline
\multirow{2}{*}{Central South}& Value & *** 13.2276 & *** 100.3647 & 0.0052 & ** 0.0056 & *** 45.4237 & 0.0093
\\
    & s.e. & 2.3958 & 27.3713 & 0.0069 & 0.0026  & 2.05040 & \\
\hline
\multirow{2}{*}{South}& Value & *** 11.4996 & *** 98.5810 & 0.0059 & * 0.0047 & *** 41.5805 & 0.0086
\\
    & s.e. & 2.2004 & 26.9193 & 0.0068 & 0.0026 & 1.7715 & \\
\hline
\multirow{2}{*}{Sicily}& Value & *** 14.1614 & ** 173.0264 & 0.0124 & *** 0.0107 & *** 81.4377 & 0.0132
\\
    & s.e. & 2.5146 & 46.9427 & 0.0120 & 0.0044 & 6.4833 & \\
\hline
\multirow{2}{*}{Sardinia}& Value & *** 18.4580 & *** 94.7809 & 0.0020 & ** 0.0129 & *** 68.2290 & 0.1216 \\
    & s.e. & 2.9547 & 33.1946 & 0.0085 & 0.0031 & 4.2260 & \\
\hline
\end{tabular}
\end{footnotesize}
\caption{Estimated parameters for the Ornstein Uhlenbeck. Significance code: *** $= p < 0.01$, ** $= p < 0.05$, * $= p < 0.1$.}
\label{T1}
\end{table}




In Table \ref{T1}, under each parameter we observe the value of every estimator and its respective standard error. Moreover, for each price zone, we include the results of the Box-Pierce test to check the independence of the residuals. This test rejects the independence hypothesis for $p$-values less than $0.05$. According to the results in Table \ref{T1}, the Central South, South and Sicily zones present correlation in the residuals, therefore the proposed O-U model for electricity price is not the right choice. On the other hand the North, Central North and Sardinia zones have independent residuals implying that the model is able to explain the behavior of the electricity price. Regarding the parameters significance for this latter three zones, only the North and Sardinia zones present price impact: in the North there is only photovoltaic impact while in Sardinia only wind impact. We re-estimate the parameters considering only the zones which pass the Box-Pierce test 
and with only the significant price impact parameters. Table \ref{T1a} summarizes the obtained results.

\begin{table}[H]
\centering
\begin{footnotesize}
\begin{tabular}{| c | l | r | r | r | r | r |  r | }
\hline
Zone &  \multicolumn{6}{c |}{parameters} & Box Pierce tests \\
\cline{2-7}
 &  & $\kappa$ & $\zeta$ & $\beta^1 $  & $\beta^2 $ & $\sigma$ & $p$-value \\
\hline 
\multirow{2}{*}{North}& Value & *** 10.3702 & *** 140.5894 & ** 0.0172 & 0 & *** 47.6586 &  0.6206 \\
    & s.e. &  2.0514 & 26.4732 &  0.0054 & &  2.3747 & \\ 
\hline
\multirow{2}{*}{Central North}& Value & *** 9.2648 & *** 55.6085 & 0 & 0 & *** 65.9346 & 0.2771\\
    & s.e. & 1.9273 & 2.8265 &  & &  4.6367 & \\
\hline
\multirow{2}{*}{Sardinia}& Value & *** 18.5248 & *** 102.4620 & 0 & *** 0.0123  & *** 68.2889 & 0.1296\\
    & s.e. &  2.9510 & 6.6813 &  & 0.0017 & 4.2260 & \\
\hline
\end{tabular}
\end{footnotesize}
\caption{Significant estimated parameters for Ornstein Uhlenbeck . Significance code: $*** = p < 0.01$, $** = p < 0.05$, $* = p < 0.1$}
\label{T1a}
\end{table}


\section{The optimal installation  problem}

In this section we give the general set up and description for the singular control problem of optimally increasing the current installed power in order to maximize the profit of selling the produced energy in the market net of the installation cost. This problem is completely described and solved in \cite{KV} when $\beta > 0$.  However, the case when $\beta = 0$ can be obtained using the same procedure, which we describe in this section. Also we include a brief description and practical results of the case when $\beta > 0$ for completeness of the paper. 

\subsection{General set up and description of the problem}

Let  $(\Omega , \mathcal{F}, (\mathcal{F}_{t})_{t \geq 0} , \mathbb{P})$ be a complete filtered probability space where a one-dimensional Brownian motion $W$ is defined and $(\mathcal{F}_{t})_{t \geq 0}$ is the natural filtration generated by $W$, augmented by the $\mathbb{P}$-null sets.  

 As we have already seen, only one type of energy influences the energy price in each price zone: either photovoltaic or wind, but not both simultaneously, therefore in the sequel we use the model in Equation \eqref{OUI1} with only one single process influencing the mean reverting term of the price dynamics. Therefore, we assume that the spot price $S^{x,I}(s)$ evolves according to  
\begin{gather}
\begin{cases}
dS^{x,I}(s)  = \kappa( \zeta - \beta Y^{y} (s) - S^{x,I}(s))ds + \sigma dW(s)\mbox{ $s > 0 $ } \\
S^{x,I}(0) = x.
\end{cases}
\label{OU}
\end{gather}
where the stochastic process $Y = (Y^{y}(s))_{s \geq 0}$, with initial condition $y \in  [0,\theta]$, represents the current renewable installed power of a company, which can be increased irreversibly by installing more renewable energy generation devices, starting from an initial installed power $y \geq 0$, until a maximum $\theta$. This strategy is described by the control process $I = (I(s))_{s \geq 0}$ and takes values on the set $\mathcal{I}[0,\infty)$ of admissible strategies, defined by

$$\mathcal{I}[0,\infty) \triangleq \{I:[0,\infty)\times \Omega \rightarrow [0, \infty) : \mbox{$I$ is $(\mathcal{F}_{t})_{t \geq 0}$- adapted , $t \rightarrow I(t)$ is increasing, cadlag,}$$
$$\mbox{ with $I(0-) = 0 \leq I(t) \leq \theta- y$, $\forall t \geq 0$} \}.$$

\noindent Hence the process $Y^y $ is written as 

\begin{eqnarray}
Y^{y}(t) = y + I(t).
\end{eqnarray}

As we already said, the aim of the company is to maximize the expected profits from selling the produced energy in the market, net of the total expected cost of installing a generation device, which for an admissible strategy $I$ is described by the following utility functional

\begin{eqnarray}
\mathcal{J}(x,y,I) & = & \mathbb{E} \left[\int_{0}^{\infty}  e^{-\rho \tau} S^{x,I}(\tau)a Y^{y}(\tau)d\tau - \int_{0}^{\infty } c e^{-\rho \tau} dI(\tau) \right],
\label{utility}
\end{eqnarray}

\noindent where $\rho > 0$ is a discount factor, $c$ is the installation cost of $1$ MW of technology, $a > 0$ is the conversion factor of the installed device's rated power to the effective produced power per time unit and $S^{x}(s)$ is the electricity price, with $x$ as initial condition. The objective of the company is to maximize the functional in Equation \eqref{utility} by finding an optimal strategy $\hat{I} \in \mathcal{I}[0,\infty)$ such that 

\begin{eqnarray}
V(x,y) = \mathcal{J}(x,y,\hat{I}) & = & \sup_{ I \in \mathcal{I}[0,\infty)} \mathcal{J}(x,y,I).
\label{vfs}
\end{eqnarray}

\subsection{The optimal solution when $\beta \geq 0$}

To make the paper self contained we present in this section the systematic procedure to construct the optimal solution and characterize the value function \eqref{vfs}. All the results presented here are proved in \cite{KV}. We add some comments on the no-impact case $\beta =0$ which is not explicitly treated in \cite{KV}, but it can be derived as particular case.  

Recall that the electricity price evolves accordingly to the O-U process in Equation \eqref{OUI1}. Notice that, for a non-installation strategy $I(s) \equiv 0$ $\forall s \geq 0$, we have

\begin{eqnarray}
\mathcal{J}(x , y, 0) = \mathbb{E} \left[ \int_{0}^{\infty} e^{- \rho s} S^{x}(s) a y ds\right] =  \frac{a xy}{\rho + \kappa} + \frac{a \zeta \kappa y}{\rho (\rho + \kappa)} - \frac{a \kappa \beta y^{2}}{\rho (\rho + \kappa)} =: R(x , y).
\label{r}
\end{eqnarray}

The possible strategies that the company can follows at time zero are: do not install during a time period $\Delta t$ and earn money selling the energy already installed, or immediately install more power. The first strategy carries one equation which is obtained applying the dynamic programming principle and the second one carries an equation obtained by perturbing the value function \eqref{vfs} in the control. As a result we arrive to a variational inequality that the candidate value function $w$ should satisfy, which is

\begin{eqnarray}
\max \left\{ \mathcal{L}w(x, y) - \rho w(x,y) + a x y, \frac{ \partial w}{\partial y} - c \right\} = 0,
\label{HJB0}
\end{eqnarray}

\noindent with boundary condition $w(x,\theta) = R(x,\theta)$ and the differential operator $\mathcal{L}$ defined as

\begin{eqnarray}
\mathcal{L}^{y}u(x,y) & = & \kappa \left( \left(\zeta - \beta y\right)- x\right)  \frac{\partial u(x,y)}{\partial x} + \frac{\sigma^{2}}{2} \frac{\partial^{2} u(x,y)}{\partial x^{2}} .
\end{eqnarray}

\noindent Equation \eqref{HJB0} defines two regions: a waiting region $\mathbb{W}$ and an installation region $\mathbb{I}$, given by

\begin{eqnarray}
\mathbb{W} = \Bigg\{ (x, y) \in \mathbb{R} \times [0, \theta): \mathcal{L}w(x, y) - \rho w(x,y) + a x y = 0, \frac{ \partial w}{\partial y} - c < 0 \Bigg\},\\
\label{wr}
\mathbb{I} = \Bigg\{(x, y) \in \mathbb{R} \times [0, \theta): \mathcal{L}w(x, y) - \rho w(x,y) + a x y \leq 0, \frac{ \partial w}{\partial y} - c = 0 \Bigg\}.
\label{ir}
\end{eqnarray}

\noindent which define when it is optimal to install more power or not.  

It is proved in \cite[Theorem 3.2]{KV} that the solution of \eqref{HJB0} with linear growth identifies with the value function $V$. 

Additionally, it is proved that these two regions are separated by the strictly increasing function $F:[0,\theta] \rightarrow \mathbb{R}$ \cite[Corollary 4.5]{KV}, called the free boundary. Therefore $\mathbb{W}$ and $\mathbb{I}$ can be written as 

\begin{eqnarray}
\mathbb{W} = \{ (x,y) \in \mathbb{R} \times [0,\theta): x < F(y) \},
\label{wrF}
\end{eqnarray} 
\begin{eqnarray}
\mathbb{I} = \{ (x, y) \in \mathbb{R} \times [0, \theta): x \geq F(y) \}.
\label{irF}
\end{eqnarray}

\noindent Now we can describe the optimal strategy using \eqref{wrF} and \eqref{irF}. When the current electricity price $S^{x}(t)$ is sufficiently low, such that $S^{x}(t) < F(Y^{y}(t))$, then the optimal choice is to not increment the installed power until the electricity price crosses $F(Y^{y}(t))$, passing to the installation region, where the optimal choice is to increase the installed power in order to maintain the pair price-power $(S^x(t),Y^y(t))$ not below of the free boundary. Once $S^{x}(t) \geq F(\theta)$ the optimal choice is restricted to increased immediately the installed power level up to the maximum $\theta$. We explain again this strategy in Section 6 observing the numerical solutions and graphics obtained for the Italian case.


Setting  $\hat{F}(y) = F(y) + \beta y$, the free boundary is characterized by the ordinary differential equation \cite[Proposition 4.4 and Corollary 4.5]{KV}

\begin{gather}
\begin{cases}
\hat{F}^{'}(y) = \beta \times \displaystyle \frac{N(y, \hat{F}(y))}{D(y, \hat{F}(y))}, \mbox{ $y \in [0, \theta)$}\\
\hat{F}(\theta) = \hat{x}.
\end{cases}
\label{ODE}
\end{gather}

\noindent where
 
\begin{eqnarray*}
N(y,z) & = & \left(\psi(z) \psi^{''}(z)- \psi^{'}(z)^{2} \right) \left( \frac{\rho + 2\kappa}{\rho} \psi^{'}(z) + \left((\rho + \kappa)\left(c - \hat{R}(z,y) \right)\psi^{''}(z) + \psi^{'}(z)  \right)  \right),
\end{eqnarray*}

\begin{eqnarray*}
D(y,x) = \psi(x) \left( (\rho + \kappa)(c - \hat{R}(x,y))\left(\psi^{'}(x) \psi^{'''}(x) - \psi^{''}(x)^{2} \right) + \psi(x) \psi^{'''}(x) - \psi^{'}(x) \psi^{''}(x)\right)
\end{eqnarray*}

\noindent and the function $\psi$ is the strictly increasing and positive fundamental solution of the homogeneous equation $\mathcal{L} w(x, y) - \rho w(x,y) = 0$ (see  in \cite[Lemma 4.3]{KF} or in \cite[Lemma A.1]{KV}), given by

\begin{eqnarray} \label{psi}
\psi(x) & = & \frac{1}{\Gamma(\frac{\rho}{\kappa})} \int_{0}^{\infty}t^{\frac{\rho}{\kappa} - 1 } e^{-\frac{t^{2}}{2}-\left(\frac{x - \zeta}{\sigma} \sqrt{2 \kappa}\right)t} dt
\end{eqnarray}

\noindent and

\begin{eqnarray}
\hat{R}(x,y) = \frac{a \zeta \kappa + a \rho x - a \beta(\rho + 2 \kappa)y}{\rho(\rho + \kappa)}.
\label{rhat}
\end{eqnarray}

\noindent On the other hand, the boundary condition $\hat{x}$ in \eqref{ODE} is the unique solution of 

\begin{eqnarray} \label{xhat}
\psi'(x)(c - \hat{R}(x,\theta)) + (\rho + \kappa)^{-1} \psi (x) & = & 0.
\label{Acca}
\end{eqnarray}

\begin{remark}
The solution $\hat{x}$ is such that $\hat{x} \in \left( \bar{c}, \bar{c} + \frac{\psi(\bar{c})}{\psi^{'}(\bar{c})} \right)$, with $\bar{c} = c(\rho + \kappa)- \frac{\zeta \kappa - \beta(\rho + 2 \kappa)\theta}{\rho}$ \cite[Lemma 4.2]{KV}.
\label{R1}
\end{remark}

\subsection{The case $\beta = 0$}

When there is not impact, i.e., $\beta = 0$, we have $\hat{F}(y) \equiv F(y)$, then from \eqref{ODE} every $y \in [0, \theta)$, $F^{'}(y) \equiv 0$, hence the free boundary is a constant with value $F(y) = \hat{x}$, with $\hat{x}$ the same solution of \eqref{Acca}, considering $\beta = 0$ in the function $\hat{R}(x,y)$ defined in \eqref{rhat}. Notice that in this case $\hat{R}$ does not depend on $y$.

The candidate value function is given by

\begin{gather}
w(x ,y) =
\begin{cases}
A(y ) \psi(x ) + R(x , y ) \mbox{   , if } (x , y) \in \mathbb{W} \cup ( \{ \theta \}  \times (- \infty , \hat{x} )) \\
R(x, \theta) - c(\theta - y ) \mbox{   , if } (x , y) \in \mathbb{I} \cup ( \{  \theta \}  \times ( \hat{x} , \infty  )) 
\end{cases},
\label{shjb}
\end{gather} 

\noindent with $R(x , y)$ defined in Equation \eqref{r}, $\psi(x)$ given by Equation \eqref{psi} and $A(y)$ given by

\begin{eqnarray}
A(y) =  \frac{\theta - y }{(\rho + \kappa) \psi^{'}(\hat x)}.
\end{eqnarray}

\noindent The optimal control is written as (see \cite[Theorem 4.8]{KV}
\begin{gather}
\hat{I}(t) = \begin{cases}
0 \mbox{ , } t \in [0, \tau) \\
\theta - y \mbox{ , } t \geq \tau
\end{cases},
\label{oss}
\end{gather}

\noindent with $\tau = \inf \{t \geq 0, X(t) \geq \hat{x} \}$.  


\color{black}

\section{A market with $N$ producers}

As mentioned in the Introduction,  Italy has a liberalized market, thus there is not a single producer which can impact prices by him/herself as is assumed in Section 4. Conversely, prices are impacted by the cumulative installation of all the power producers which are present in the market. 
For this reason, we now consider a market with $N$ producers, indexed by $i = 1,\ldots,N$.
The cumulative irreversible installation strategy of the producer $i$ up to time $s$, denoted by $I_{i}(s)$, is an adapted, nondecreasing, cadlag process, such that $I_{i}(0) = 0$. We assume that the aggregated installation of the $N$ firms is allowed to increase until a total maximum constant power $\theta$, that is,

\begin{eqnarray}
\sum_{i = 1}^{N} \left( y_{i} + I_{i}(s) \right) \leq \theta   \mbox{ $\mathbb{P} -$ a.s., }  s \in [0,\infty),
\end{eqnarray}

\noindent where $y_{i}$ is the initial installed power for the firm $i$ and indicate by $\bar{y} = (y_1 , \ldots , y_N )$ the vector of the initial conditions. We denote by $\mathcal{I}_{N}$ the set of admissible strategies of all the players
$$ \mathcal{I}_{N} \triangleq \{\bar{I}:[0,\infty) \times \Omega \rightarrow [0, \infty)^{N} \mbox{ , non decreasing, left continuous adapted process} $$
$$ \mbox{ with $I_{i}(0) = 0$, $\mathbb{P}$-a.s., $\sum_{i=1}^{N} ( y_{i} + I_{i}(s)) \leq \theta$} \}.$$
and notice that each player is constrained, in its strategy, by the installation strategies of the other players.

\subsection{Pareto optima}

We now consider the cooperative situation of a social planner, where the problem consists into finding a efficient installation strategy $\hat{I} \in \mathcal{I}_{N}$ which maximizes the aggregate expected profit, net of investment cost \cite{ferrari}.  While in many liberalized markets there is not a single being which can {\em impose} a given strategy to all the players, this is equivalent to solving a cooperative game with the maximum possible coalition containing all the players.   

The social planner problem, therefore is expressed as 
\begin{eqnarray}
V_{SP} = \sup_{\bar{I} \in \mathcal{I}_{N}} \mathcal{J}_{SP}(\bar{I}),
\label{vfsoc}
\end{eqnarray}

\noindent where

\begin{eqnarray}
\mathcal{J}_{SP}(\bar{I}) = \sum_{i =1}^{N} \mathcal{J}_{i}(I_{i})
\label{sp}
\end{eqnarray}

\noindent and for $i = 1,2,\ldots ,N$,

\begin{eqnarray}
\mathcal{J}_{i}(x, \bar{y}, \bar{I}) & = & \mathbb{E} \left[\int_{0}^{\infty} e^{-\rho \tau}S^{x,\bar{y},\bar{I}}(s)(\tau)a(y_{i} + I_{i}(\tau)) d\tau - c \int_{0}^{\infty} e^{- \rho \tau} dI_{i}(\tau) \right],
\label{UNC}
\end{eqnarray}

\noindent where $\rho$, $a$ and $c$ are the same defined in \eqref{utility}. The process $S^{x, \bar{y}, \bar{I}}(s)$ is the electricity price affected by the sum of the installations of all the agents which, in analogy with the one-player case, we assume to follow an O-U process with an exogenous mean reverting term, whose dynamics is given by 


\begin{gather}
\begin{cases}
dS^{x, \bar{y}, \bar{I}}(s)  = \kappa( \zeta -  \beta \sum_{i= 1}^{N} \left( y_{i} + I_{i}(s) \right) - S^{x, \bar{y}, \bar{I}}(s))ds + \sigma dW(s)\mbox{  $s > 0 $  ,} \\
S^{x, \bar{y}, \bar{I}}(0) = x.
\end{cases}
\label{OUI1N}
\end{gather}
Call now $\nu(t) = \sum_{i =1}^{N}  I_{i}(t)$ and $ \gamma = \sum_{i =1}^{N}  y_{i}$: then, by substituting on the social planner functional \eqref{sp}, we get
\begin{eqnarray}
\mathcal{J}_{SP}(\bar{I}) & =  &\sum_{i =1}^{N} \mathbb{E} \left[\int_{0}^{\infty} e^{-\rho \tau} S^{x, \bar{y}, \bar{I}}(\tau)a (y_{i} + I_{i}(\tau)) d\tau - c \int_{0}^{\infty}e^{-\rho \tau} dI_{i}(\tau)  \right] \\
& = & \mathbb{E} \left[\int_{0}^{\infty}e^{-\rho \tau} S^{x, \bar{y}, \bar{I}}(\tau)a \left(\sum_{i =1}^{N} y_{i} + \sum_{i =1}^{N}  I_{i}(\tau)\right) d\tau - c \int_{0}^{\infty} e^{-\rho \tau}d \left(\sum_{i =1}^{N} I_{i}(\tau)\right) \right]\\
& = & \mathbb{E} \left[\int_{0}^{\infty} e^{-\rho \tau} S^{x, \bar{y}, \bar{I}}(\tau) a ( \gamma + \nu(\tau)) d\tau - c \int_{0}^{\infty} e^{-\rho \tau} d \nu (\tau) \right].
\label{jspf}
\end{eqnarray}

\noindent Observe that we have the same optimal control problem as in the single company case (Section 4), therefore we can guess that the optimal solution for the social planner will be equal to that for the single company. The aggregate optimal strategy for the $N$ producer of a given region results to be Pareto optimal (see Lemma \ref{L} below). 

\begin{lemma}
If $\hat{I} \in \argmax \mathcal{J}_{SP}(I)$, then $\hat{I}$ is Pareto optimal.
\label{L}
\end{lemma}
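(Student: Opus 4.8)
The plan is to prove the statement by contradiction, exploiting the fact that $\mathcal{J}_{SP}$ is precisely the sum of the individual payoffs with equal (hence strictly positive) weights. First I would fix the notion of Pareto optimality appropriate to this game: a profile $\hat{I} = (\hat{I}_1, \ldots, \hat{I}_N) \in \mathcal{I}$ is Pareto optimal if there is no other admissible profile $\bar{J} = (J_1, \ldots, J_N) \in \mathcal{I}$ that weakly improves every player, i.e. $\mathcal{J}_i(\bar{J}) \geq \mathcal{J}_i(\hat{I})$ for all $i \in \{1, \ldots, N\}$, with at least one strict inequality. I would stress here that each $\mathcal{J}_i$ is to be read as evaluated at the full profile: the players are coupled through the common impacted price $S^{x,I}$ of \eqref{OUI1N}, so both the dominance relation and $\mathcal{J}_{SP}$ are genuinely functions of the whole vector of controls, and the notation $\mathcal{J}_i(I_i)$ in the statement is understood in this sense.

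Assume, for contradiction, that $\hat{I} \in \argmax \mathcal{J}_{SP}$ but that $\hat{I}$ is not Pareto optimal. By the definition above there is an admissible $\bar{J} \in \mathcal{I}$ and an index $i_0$ with $\mathcal{J}_{i_0}(\bar{J}) > \mathcal{J}_{i_0}(\hat{I})$ and $\mathcal{J}_i(\bar{J}) \geq \mathcal{J}_i(\hat{I})$ for every $i \neq i_0$. Summing these $N$ relations over $i$ and using \eqref{sp} gives
\[
\mathcal{J}_{SP}(\bar{J}) = \sum_{i=1}^N \mathcal{J}_i(\bar{J}) > \sum_{i=1}^N \mathcal{J}_i(\hat{I}) = \mathcal{J}_{SP}(\hat{I}),
\]
where the strict inequality survives the summation because a finite sum of weak inequalities together with one strict inequality is strict. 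This contradicts the optimality of $\hat{I}$ for the social planner, which establishes the claim.

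The argument is elementary and I do not expect a genuine analytic obstacle: no continuity, convexity, or existence property of the value function is needed beyond the definitions themselves. The only point requiring care is conceptual, namely that the weights in $\mathcal{J}_{SP}$ (all equal to one) are strictly positive, which is exactly what transmits a single strict improvement into a strict improvement of the aggregate; were some weight to vanish, the conclusion could fail. I would also note that admissibility of the dominating profile $\bar{J}$ is part of the very definition of Pareto optimality, so no separate verification that $\bar{J} \in \mathcal{I}$ is required, and the coupling of the payoffs through the price plays no role beyond being absorbed into the definition of each $\mathcal{J}_i$.
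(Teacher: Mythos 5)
Your proof is correct and is essentially identical to the paper's own argument: both proceed by contradiction, taking a profile that weakly dominates $\hat{I}$ with one strict improvement, summing the $N$ payoff inequalities via \eqref{sp}, and contradicting the maximality of $\hat{I}$ for the social planner. Your additional remarks (on the coupling of payoffs through the common price and on the strict positivity of the unit weights) are sound clarifications but do not change the route.
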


\begin{proof}
Suppose $\hat{I} \in \argmax \mathcal{J}_{SP}(\bar{I})$ and assume $\hat{I}$ is not Pareto optimal, then there exist $I^{*}$ such that,

\begin{eqnarray}
\mathcal{J}_{i}(I^{*}_{i}) \geq \mathcal{J}_{i}(\hat{I}_{i}) \mbox{ , $\forall$ $i \in \{ 1,\ldots, N  \}$}
\end{eqnarray}

\noindent where at least one inequality is strict. Then,

\begin{eqnarray}
\sum_{i= 1}^{N} \mathcal{J}_{i}(I^{*}_{i}) > \sum_{i= 1}^{N} \mathcal{J}_{i}(\hat{I}_{i}),
\end{eqnarray}

\noindent contradicting the fact that $\hat{I}$ is maximizing.
\end{proof}

As already said in the Introduction, with this approach it is not possible to distinguish the single optimal installations of each producer, as we can only characterize the cumulative installation $\nu(t) = \sum_{i =1}^{N}  I_{i}(t)$, while the single components $I_{i}(t)$ remain to be determined. However, our declared aim is about the effective cumulative installation strategy which was carried out in Italy during the time period covered by the dataset. Thus, in the next section we compare this with the optimal one which we obtained theoretically. 

In the next subsections, instead, we compare these Pareto optima, obtained by assuming that players would cooperate to achieve the maximum cumulative payoff, with Nash equilibria, which instead assume that players compete actively to individually maximize their own payoff. 

\subsection{Nash equilibria in the case $N = 2$}

The Pareto optima found previously for the social planner problem assume a collaboration between players: nevertheless, it could be also possible to have competition in the market between the players, therefore it makes sense to study the non cooperative case and search for Nash equilibria. In particular we solve the case with two players and we compare both results.

The formulation for the competitive game with two players states as follows: the electricity price evolves according to $\eqref{OUI1N}$ and every player aims to maximize its own utility \eqref{UNC}. 
In this case, in analogy with \cite{GTX}, we will look for a subset of the admissible strategies $\mathcal{I}_{2}$, which we describe next.

\begin{definition}(Markovian strategy and admissible control set) A strategy $I(t) \in \mathcal{I}$ is called Markovian if 
$I(t) = I(S(t), Y^{1}(t-), Y^{2}(t-))$ for all $t \geq 0$, where 
$I$ is a deterministic function of the states immediately before time $t$. \color{black} We define the admissible set of Markovian strategies as follows
$$\mathcal{I}_{2}^{M} := \left\{ I_1 , I_2 \in \mathcal{I}_{2}\ |\ (I_1 , I_2) \mbox{ are Markovian strategies} \right\} \subset \mathcal{I}_{2}.$$
\end{definition}

\begin{definition}(Markovian Nash equilibrium)  We say that $ \bar{I}^{*} = (I_{1}^{*} , I_{2}^{*}) \in \mathcal{I}_2^{M} $ is a Markovian Nash equilibrium if and only if for every \color{black} $x \in \mathbb{R}$ and $\bar{y} = (y_1 ,y_2) \in [0, \theta] \times [0, \theta]$, we have 

\begin{eqnarray*}
|\mathcal{J}_{i}(x,\bar{y}, \bar{I}^{*}) | < \infty  \mbox{  , $i = 1,2$} 
\end{eqnarray*}

\noindent and


\begin{gather}
\begin{cases}
\mathcal{J}_{1}(x, \bar{y}, I_{1}^{*} , I_{2}^{*}) \geq \mathcal{J}_{1}(x, \bar{y}, I_{1} , I_{2}^{*}) \mbox{ for any }  I_{1}, \mbox{ such that } (I_{1}, I_{2}^{*}) \in \mathcal{I}_{2} ,\\
\mathcal{J}_{2}(x, \bar{y}, I_{1}^{*} , I_{2}^{*}) \geq \mathcal{J}_{2}(x, \bar{y}, I_{1}^{*} , I_{2}) \mbox{ for any }  I_{2}, \mbox{ such that } (I_{1}^{*}, I_{2}) \in \mathcal{I}_{2} .
\end{cases}
\end{gather}

The value function corresponding to the Nash equilibrium for each player $i$ is defined as 

\begin{eqnarray}
V_{i}(x, \bar{y}) : = \mathcal{J}(x, \bar{y} , \bar{I}^{*}).
\end{eqnarray}

\end{definition}

For each player we also define the waiting and installation regions, for Markovian Nash equilibria, defined as follows \cite{GTX}.

\begin{definition}(Installation and waiting regions) The installation region of player $i$ is defined as the set of points $\mathbb{I}_i \subseteq \mathbb{R} \times [0,\theta]^2$ such that $dI^*_i(t) \neq 0$ if and only if $(X(t), Y_1(t-),Y_2(t-)) \in \mathbb{I}_i$,  
%
%
and its waiting region as $\mathbb{W}_i = \mathbb{I}_{i}^{c}$.
\end{definition}


We derive the Hamilton-Jacobi-Bellman equation following this heuristic argument: by the Markovian structure it is enough to observe the case at time $t = 0$. For agent $i$, it can decide to do not increase the current level of installed power and also player $j$, i.e., the strategy is $\bar{I} = \bar{I}^{0} \equiv (0,0)$ and both continue optimally.
 In this case, the control problem reduces to the single player case and we have 

\begin{eqnarray*}
V_{i}(x,\bar{y} ) \geq \mathbb{E}\left[ \int_{0}^{\Delta t} e^{-\rho s} a S^{x, \bar{y}, \bar{I}^{0} }(s) y_i ds + e^{- \rho \Delta t} V_{i}(S^{x, \bar{y}, \bar{I}^{0} }(\Delta t), \bar{y} )   \right] \mbox{ , } 
\end{eqnarray*}

\noindent leading to

\begin{eqnarray}
\mathcal{L}^{\bar{y} }V_{i}(x,\bar{y} )- \rho V_{i}(x,\bar{y} ) + axy_i \leq 0 \mbox{ , } 
\end{eqnarray}

\noindent with $\mathcal{L}^{\bar{y}}$ the differential operator defined by

\begin{eqnarray}
\mathcal{L}^{\bar{y} } u(x,\bar{y}  ) = \sigma \frac{\partial^{2} u(x,\bar{y} )}{\partial x^2 } + \kappa \left( \zeta - x - \beta
\sum_{i=1}^{2}y_i \right)\frac{\partial u(x,\bar{y})}{\partial x}.
\end{eqnarray}

\noindent Conversely, player $i$ can decide to increase its level by $\epsilon$ while player $j$ does not increase its level, then both continue optimally
, which is associated with

\begin{eqnarray}
V_{i}(x, \bar{y} ) \geq V_{i}(x, \bar{y} + e_{i} \epsilon ) - c \epsilon,
\end{eqnarray} 

\noindent where $e_i$ is the canonical vector in the direction $i$. Dividing by $\epsilon$ and $\epsilon \downarrow 0$, we get

\begin{eqnarray}
0 \geq  \frac{\partial V_{i}(x,\bar{y} )}{\partial y_i}  - c.
\end{eqnarray}

\noindent Let us assume instead that player $i$ decides to not increase its level while player $j$ increases its level. By definition of Nash equilibrium, player $i$ is not expected to suffer a loss, therefore


\begin{eqnarray}
V_i(x,\bar{y}) \geq V_i (x, \bar{y} + e_{j}\epsilon), 
\end{eqnarray}
\noindent where $e_{j}$ is the canonical vector in the direction $j$. Dividing the above expression by $\epsilon$ and letting $\epsilon\downarrow 0$, we obtain
\begin{eqnarray}
\frac{\partial V_{i}(x,\bar{y})}{\partial y_{j}}  \leq 0.
\end{eqnarray}

\noindent Finally, if instead both players decide to increase their level by $\epsilon$ and continue optimally, 
this is associated with

\begin{eqnarray}
V_{i}(x, \bar{y} ) \geq V_{i}(x, \bar{y} + (1,1) \epsilon ) - c \epsilon,
\end{eqnarray} 

\noindent dividing by $\epsilon$ and $\epsilon \downarrow 0$, we get

\begin{eqnarray}
0 \geq  \frac{\partial V_{i}(x,\bar{y} )}{\partial y_i} +  \frac{\partial V_{i}(x,\bar{y})}{\partial y_j} - c.
\end{eqnarray}

\noindent The above arguments suggest that the value function of player $i = 1,2$, $V_i (x, \bar{y} )$ should be identified with  a \color{black} solution of the following variational inequality

\begin{gather}
\begin{cases}
\max \left\{ \mathcal{L}^{\bar{y} }w_i(x,\bar{y} )- \rho w_i(x,\bar{y} ) + axy_i , \frac{ \partial w_i (x,\bar{y} )}{\partial y_i}  - c  \right\} = 0, & \mbox{ $(x,\bar{y}  ) \in \mathbb{W}_{j}$ }\\
\max \left\{ \frac{\partial w_{i}}{\partial y_{j}} , \sum_{k = 1}^{2} \frac{ \partial w_i (x,\bar{y} )}{\partial y_k}  - c  \right\} = 0, & \mbox{ $(x,\bar{y}  ) \in \mathbb{I}_{j}$ }
\end{cases}
\label{vi2n}
\end{gather}


\noindent with $i \neq j$ and with the boundary condition $w_i(x,\bar{y} ) = R_i(x,\bar{y} )$ whenever $\sum_{i=1}^{2}y_i = \theta$, where

\begin{eqnarray*}
R_i (x,\bar{y} ) & := & \mathcal{J}_i (x,\bar{y}, \bar{I}^{0}) = 
\mathbb{E} \left[ \int_{0}^{\infty} e^{-\rho s} aS^{x,\bar{y}, \bar{I}^{0}}(s)y_i ds \right] \\
& = & \frac{axy_i}{\rho + \kappa} + \frac{a\zeta \kappa y_i}{\rho (\rho + \kappa)} - \frac{a\kappa \beta  y_i  \sum_{i=1}^{2}y_i}{\rho (\rho + \kappa)}.
\end{eqnarray*}

\begin{remark}
We point out that, in stochastic singular games, the usual framework is that a player can act only when the other ones are idle, i.e. $\mathbb{I}_i \cap \mathbb{I}_j = \emptyset$ for all $i \neq j$, see e.g. \cite{CGX,DeAFer,GTX,Xin}. Here instead the variational inequality \eqref{vi2n}, and the argument before it, takes explicitly into consideration the possibility that both players acts (i.e. install) simultaneously. 
This possibility will be confirmed in Section 5.3, where the presented Nash equilibrium will even have both players acting and waiting simultaneously, i.e. $\mathbb{I}_i = \mathbb{I}_j$.
\end{remark}

Now we establish a verification theorem for the value function.

\begin{theorem}(Verification theorem) For any $i = 1,2$, suppose $\bar{I}^{*} \in \mathcal{I}_2^{M}$, the corresponding $w^{i}(\cdot) = \mathcal{J}(\cdot; \bar{I}^{*})$ satisfies the following: 

\begin{itemize}

\item[(i)] { $w_{i} \in C^0( \mathbb{R} \times [0, \theta]^{2} ) \cap C^{2,1,1}(\mathbb{W}_j)$, with $j \neq i$;}

\item[(ii)] $w_i$ satisfies the growth condition
\begin{eqnarray}
| w_{i}(x,y_1 , y_2 )| \leq K(1 + |x|); 
\end{eqnarray}
\item[(iii)] $w_i$ satisfies Equation \eqref{vi2n}, 
with $i \neq j$, with the boundary condition $w_i(x,\bar{y} ) = R_i(x,\bar{y} )$, whenever $\sum_{i=1}^{2}y_i = \theta$;

\end{itemize}

\noindent then $\bar{I}^{*}$ is a Nash equilibrium with value function $w_i$ for each player $i = 1,2$.
\end{theorem}











{ 
\begin{remark}
Differently from the one-player case, where the value function is required to be of class $C^2$ (or at least smooth enough for the Ito formula to be applied) in the whole domain, here each candidate value function $w_i$ is required to be smooth only in the continuation region ${\mathbb W}_j$ of the other player as, under a Nash equilibrium, the state will not exit from there. In fact, player $j$ will not deviate from $I^*_j$, thus making ${\mathbb I}_j$ inaccessible: for this reason, player $i$ will be allowed to change its controls only in ${\mathbb W}_j$. 
This is analogous with other results on singular control games based on variational inequalities, see e.g. \cite{DeAFer,GTX,Xin}
\end{remark}
}

\begin{proof} Let $(x,\bar{y} ) \in \mathbb{R} \times [0,\theta)^{2} $ be given and fixed, and $(I_{i}, I^{*}_{j}) = \bar{I} \in \mathcal{I}_2^{M} $. Denote by $\Delta I^{i}(s) = I_{i}(s) - I_{i}(s-)$ and $I_{i}^{c}$ the continuous part of the strategy $I$. Define $\tau_{R,N} : = \tau_{R} \wedge N$, where $\tau_{R} = \inf\{ s > 0 : S^{x, \bar{y}} \notin (-R,R) \}$.  Applying the Ito formula to   $e^{-\rho \tau_{R,N}}w_{i}(S^{x,\bar{y} }(\tau_{R,N}) , Y_{i}(\tau_{R,N}) , Y_{j}^{*}(\tau_{R,N}))$ \color{black} we have

\begin{eqnarray}
& & e^{-\rho \tau_{R,N}}w_{i}(S^{x,\bar{y}, \bar{I} }(\tau_{R,N}) , Y_{i}(\tau_{R,N}) , Y_{j}^{*}(\tau_{R,N})) -  w_{i}(x,y_i ,y_{j}) = \\
& & \int_{0}^{\tau_{R,N}} \left(  -\rho e^{- \rho s}w_{i}(S^{x,\bar{y} , \bar{I}}(s) , Y_{i}(s) , Y_{j}^{*}(s)) + e^{-\rho s} \mathcal{L}^{\bar{y}  }w_{i}(S^{x,\bar{y} , \bar{I}}(s) , Y_{i}(s) , Y_{j}^{*}(s)) \right)ds\\
& & + \int_{0}^{\tau_{R,N}} \sigma \frac{\partial w_i (S^{x,\bar{y} , \bar{I} }(s) , Y_{i}(s) , Y_{j}^{*}(s))}{\partial x} dW(s) \nonumber \\ 
& & + \int_{0}^{\tau_{R,N}} e^{- \rho s} \frac{\partial w_{i}(S^{x,\bar{y}, \bar{I}}(s) , Y_{i}(s) , Y_{j}^{*}(s))}{\partial y_i}dI_{i}^{c}(s) + \int_{0}^{\tau_{R,N}} e^{- \rho s} \frac{\partial w_{i}(S^{x,\bar{y} , \bar{I} }(s) , Y_{i}(s) , Y_{j}^{*}(s))}{\partial y_j}dI_{j}^{*c}(s)\nonumber \\
& & + \sum_{0 \leq s \leq \tau_{R,N}} e^{-\rho s} \left[ w_{i}(S^{x,\bar{y}, \bar{I} }(s) , Y_{i}(s) , Y_{j}^{*}(s)) - w_{i}(S^{x,\bar{y}, \bar{I}  }(s) , Y_{i}(s-) , Y_{j}^{*}(s-)) \right].
\label{pvt}
\end{eqnarray}

\noindent Set $\Delta Y_{k}(s) = Y_{k}(s) - Y_{k}(s-)$, $k = 1,2$ and notice that

\begin{eqnarray*}
& &w_{i}(S^{x,\bar{y} , \bar{I}}(s) , Y_{i}(s) , Y_{j}^{*}(s)) - w_{i}(S^{x,\bar{y} , \bar{I} }(s) , Y_{i}(s-) , Y_{j}^{*}(s-)) \\ 
& & =   \int_{0}^{1} \left[ \frac{\partial w_{i}(S^{x,\bar{y}, \bar{I} }(u) , Y_{i}(u) , Y_{j}^{*}(u))}{\partial y_i } \Delta Y_{i}(u)  +  \frac{\partial w_{i}(S^{x,\bar{y} , \bar{I}}(u) , Y_{i}(u) , Y_{j}^{*}(u))}{\partial y_j } \Delta Y_{j}(u)  \right] du.
\end{eqnarray*}


\noindent Considering the above expression, taking expectation in \eqref{pvt}, observing that the process
 $$ \left( \int_{0}^{\tau} \sigma \frac{\partial w_i (S^{x,\bar{y}, \bar{I}  }(s) , Y_{i}(s) , Y_{j}^{*}(s))}{\partial x} dW(s) \right)_{\tau \geq 0} $$ is a martingale and using assumptions $(ii)$, we have

\begin{eqnarray*}
& & w_{i}(x,y_i ,y_{j}) + K \mathbb{E}\left[ e^{-\rho \tau_{R,N}}\left( 1 + |S^{x,\bar{y}, \bar{I} }(\tau)|\right)\right] \geq\\
&  & = \mathbb{E} \left[ \int_{0}^{\tau_{R,N}} \left(  \rho e^{- \rho s}w_{i}(S^{x,\bar{y} , \bar{I} }(s) , Y_{i}(s) , Y_{j}^{*}(s))  - e^{-\rho s} \mathcal{L}^{\bar{y}  }w_{i}(S^{x,\bar{y} , \bar{I} }(s) , Y_{i}(s) , Y_{j}^{*}(s)) \right)ds \right. \\
& & - \int_{0}^{\tau_{R,N}} e^{- \rho s} \frac{\partial w_{i}(S^{x,\bar{y} , \bar{I}}(s) , Y_{i}(s) , Y_{j}^{*}(s))}{\partial y_i}dI_{i}^{c}(s) - \int_{0}^{\tau_{R,N}} e^{- \rho s} \frac{\partial w_{i}(S^{x,\bar{y}  }(s) , Y_{i}(s) , Y_{j}^{*}(s))}{\partial y_j}dI_{j}^{*c}(s)\\
& & \left. - \sum_{0 \leq s \leq \tau_{R,N}} e^{-\rho s} \int_{0}^{1} \left[ \frac{\partial w_{i}(S^{x,\bar{y} , \bar{I}}(u) , Y_{i}(u) , Y_{j}^{*}(u))}{\partial y_i } \Delta Y_{i}(u)  +  \frac{\partial w_{i}(S^{x,\bar{y}, \bar{I} }(u) , Y_{i}(u) , Y_{j}^{*}(u))}{\partial y_j } \Delta Y_{j}(u)  \right] du\right].\\
\end{eqnarray*}

\noindent Using the variational equation of assumption (iii), we get

\begin{eqnarray*}
& & w_{i}(x,y_i ,y_{j}) + K \mathbb{E}\left[ e^{-\rho \tau_{R,N}}\left( 1 + |S^{x,\bar{y} , \bar{I}}(\tau)|\right)\right] \\
&  & \geq\mathbb{E} \left[ \int_{0}^{\tau_{R,N}}  e^{- \rho s}  aS^{x,\bar{y}, \bar{I} }(s)Y_{i}(s) ds \right. \\
& & - \int_{0}^{\tau_{R,N}} e^{- \rho s} \frac{\partial w_{i}(S^{x,\bar{y}, \bar{I} }(s) , Y_{i}(s) , Y_{j}^{*}(s))}{\partial y_i}dI_{i}^{c}(s) - \int_{0}^{\tau_{R,N}} e^{- \rho s} \frac{\partial w_{i}(S^{x,\bar{y} , \bar{I} }(s) , Y_{i}(s) , Y_{j}^{*}(s))}{\partial y_j}dI_{j}^{*c}(s)\\
& & \left. - \sum_{0 \leq s \leq \tau_{R,N}} e^{-\rho s} \int_{0}^{1} \left[ \frac{\partial w_{i}(S^{x,\bar{y} , \bar{I}}(u) , Y_{i}(u) , Y_{j}^{*}(u))}{\partial y_i } \Delta Y_{i}(u)  +  \frac{\partial w_{i}(S^{x,\bar{y} , \bar{I}}(u) , Y_{i}(u) , Y_{j}^{*}(u))}{\partial y_j } \Delta Y_{j}(u)  \right] du\right]\\
& & \geq \mathbb{E} \left[ \int_{0}^{\tau_{R,N}}  e^{- \rho s}  aS^{x,\bar{y}, \bar{I} }(s)Y_{i}(s) ds  - \int_{0}^{\tau_{R,N}} e^{- \rho s} \frac{\partial w_{i}(S^{x,\bar{y} , \bar{I}}(s) , Y_{i}(s) , Y_{j}^{*}(s))}{\partial y_i}dI_{i}^{c}(s) \right.\\
& &\left. - \sum_{0 \leq s \leq \tau_{R,N}} e^{-\rho s} \int_{0}^{1} \left[ \frac{\partial w_{i}(S^{x,\bar{y}, \bar{I} }(u) , Y_{i}(u) , Y_{j}^{*}(u))}{\partial y_i } \Delta Y_{i}(u) \right] du\right]\\
& & \geq \mathbb{E} \left[ \int_{0}^{\tau_{R,N}}  e^{- \rho s}  aS^{x,\bar{y} , \bar{I}}(s)Y_{i}(s) ds - c \int_{0}^{\tau_{R,N}} e^{- \rho s}dI_{i}(s) \right]. 
\end{eqnarray*}

\noindent We can apply the dominated convergence theorem in the last expression since (see proof \cite[Theorem 3.2]{KV} for the computations of the following estimates)




\begin{eqnarray*}
\mathbb{E} \left[ \int_{0}^{\tau}  e^{- \rho s}  aS^{x,\bar{y}, \bar{I} }(s)Y_{i}(s) ds - c \int_{0}^{\tau} e^{- \rho s}dI_{i}(s) \right] \leq \theta \int_{0}^{\infty} e^{-\rho s} \left(  |S^{x,\bar{y},, \bar{I}^{0}}(s)| + \kappa \beta \theta s \right)ds + c\theta
\end{eqnarray*}

\noindent and 

\begin{eqnarray}
\mathbb{E}\left[ e^{-\rho \tau_{R,N}}\left( 1 + |S^{x,\bar{y}, \bar{I} }(\tau_{R,N})| \right) \right] \leq C_1 \mathbb{E}\left[ e^{- \rho \tau_{R,N}} (1 + \tau_{R,N}) \right] + C_3 \mathbb{E}\left[ e^{-\rho \tau_{R,N}}\right]^{1/2} (1 + x^{2}).
\end{eqnarray}

\noindent Letting $N \uparrow \infty$ and $R \uparrow \infty$, we get

\begin{eqnarray*}
\mathcal{J}(x,\bar{y}, I_{i}, I_{j}^{*}) \leq w_{i}(x, \bar{y}),
\end{eqnarray*}

\noindent for all $I_{i}$ such that $(I_{i}, I^{*}_{j}) \in \mathcal{I}_{2}^{M}$, therefore $\bar{I}^{*}$ is a Markovian Nash equilibrium.

\end{proof}







\subsection{The case $\beta = 0$: comparison between Pareto optimum and Nash equilibrium}

While a complete characterization of Nash equilibria in the general case appears to be technically very challenging and is beyond the scope of this article, here we analyze the case without price impact, i.e. with $\beta = 0$. Inspired by the one-player optimal control and by the $N$-players Pareto optima, we search for a Nash equilibrium $\bar I^*$ where the players, which have initial installation equal to $(Y_1(0), Y_2(0)) = (y_1,y_2)$, wait until the price surpasses a boundary $x^*$ to be determined, and then they make together a cumulative installation which completely saturates the total capacity $\theta$. Following the arguments of the previous subsection, we assume that they share equally this additional installation. 

More in detail, we define
\begin{equation} \label{taster}
\tau^* := \inf\{ t \geq 0\ |\ S(t) \geq x^* \}
\end{equation}
and describe the Nash equilibrium $\bar I^*$ as 
\begin{equation} \label{Istar}
\bar I^*(t) := \frac12 (\theta - y_1 - y_2) (1,1) \mathbf{1}_{t \geq \tau^*} 
\end{equation}
Obviously, in this case $\mathbb{I}_1 = \mathbb{I}_2 = (x^*,+ \infty) \times [0,\theta]^2$. 
For each player $i = 1,2$, the value function which corresponds to this strategy can be computed as follows: 
\begin{eqnarray*}
w_{i}(x,\bar{y}) & = & \mathbb{E}\left[ \int_{0}^{\tau^*} ae^{-\rho s} S^{x, \bar{y}}(s) y_i ds 
+ \int_{\tau^*}^{\infty} ae^{-\rho s} S^{x,\bar{y}}(s) \left( y_i + \frac{\theta - y_i - y_j }{2} \right)ds - \frac{ce^{-\rho \tau^* }(\theta - y_i -y_j)}{2}  \right]\\
& = & R_{i}(x,\bar{y}) + \frac{1}{2}\mathbb{E} \left[ e^{-\rho \tau^*} \int_0^{\infty} a e^{-\rho s} S^{x,\bar{y}}(\tau^* + s) \left( \theta - y_i - y_j\right)ds -ce^{-\rho \tau^* }(\theta - y_i -y_j) \right]\\
& = & R_{i}(x,\bar{y}) + \frac{1}{2}\mathbb{E} \left[ e^{-\rho \tau^*} R_i(S^{x,\bar{y}}(\tau^*),\theta - y_i - y_j,y_j) - c e^{-\rho \tau^* }(\theta - y_i -y_j) \right]
\end{eqnarray*}
where in the last equality we use the strong Markov property for the process $S$. Now, if $x < x^*$, then $\tau^* > 0$ and $\mathbb{E}[ e^{-\rho \tau^*}] = \frac{\psi(x)}{\psi(x^*)}$, with $\psi$ as in Equation \eqref{psi} \cite[Chapter 7.2]{BorSal}, and 
\begin{eqnarray*}
w_{i}(x,\bar{y}) & = & R_{i}(x,\bar{y}) + \frac{1}{2}\mathbb{E} \left[ e^{-\rho \tau^* }  \right]\left(R_i(x^* , \theta - y_i - y_j,y_j) - c(\theta - y_i -y_j) \right)\\
& = & R_{i}(x,\bar{y}) + \frac{\psi (x)}{2 \psi(x^* )} \left(R_i(x^* , \theta - y_i - y_j,y_j) - c(\theta - y_i -y_j) \right),
\label{wb01}
\end{eqnarray*}
Instead, when $x \geq x^*$, then $\tau^* \equiv 0$ and 
\begin{eqnarray*}
w_{i}(x,\bar{y}) 
& = & R_{i}(x,\bar{y}) + \frac{1}{2} \left( R_i(x,\theta - y_i -y_j,y_j) - c(\theta - y_i -y_j) \right) 
\label{wb02}
\end{eqnarray*}
\noindent Therefore, for a given level $x^*$, the value function for the strategy \eqref{Istar} is given by
\begin{gather}
w_{i}(x,\bar{y}) = \begin{cases}
R_{i}(x,\bar{y}) + \frac{\psi(x)}{2 \psi(x^*)} \left( R(x^*,\theta- y_i - y_j) - c(\theta- y_i - y_j ) \right),  & x < x^*\\
R_{i}(x,\bar{y}) + \frac{1}{2 } \left( R(x,\theta- y_i - y_j) - c(\theta- y_i - y_j ) \right),  & x^* \geq x
\end{cases}
\label{wb0}
\end{gather}

If we let $x^* := \hat x$ as the solution of Equation \eqref{xhat}, then the corresponding strategy is one of the Pareto optima found in Lemma 5.1. However, if we plug the candidate value functions of Equation \eqref{wb0} into the variational inequality \eqref{vi2n}, it turns out that this choice does {\em not}  give a Nash equilibrium. Instead, a Nash equilibrium is achieved when we let
\begin{equation} \label{cbar}
x^* := \bar c = \frac{c (\rho + \kappa)}{a} - \frac{\xi \kappa}{\rho} 
\end{equation}

\begin{proposition} If $x^* = \bar c$ defined in Equation \eqref{cbar}, then the strategy \eqref{Istar} is a Nash equilibrium and the value function for player $i = 1,2$ is given by \eqref{wb0}.
\end{proposition}

\begin{proof} The function $w_i \in C^0( \mathbb{R} \times [0, \theta]^{2} ) \cap C^{2,1,1}(\mathbb{W}_j)$ by direct computations and it has linear growth by \cite[Theorem 3.2, Lemma 4.6]{KV}. Let us check that it satisfies the variational inequality \eqref{vi2n}. First of all, the boundary condition $w_i (x,y_i , y_j) = R(x, y_i + y_j)$ whenever $y_i + y_j = \theta$ is fulfilled by direct computations.

Then, for player $i = 1,2$, in order to verify the variational inequality \eqref{vi2n}, we distinguish two cases.

\textit{Case 1}: For player $i$, $(x,\bar{y}) \in \mathbb{W}_j $. In this case we also have $(x,\bar{y}) \in \mathbb{W}_{i}$ and $x < x^* = \bar c$. We expect $w_i $ satisfies $\mathcal{L}^{\bar{y}}w_i - \rho w_i + axy_i = 0$: in fact, 

\begin{eqnarray*}
\mathcal{L}^{\bar{y}}w_i(x,\bar{y}) - \rho w_i(x,\bar{y}) + axy_i & = & \mathcal{L}^{\bar{y}}(R_{i}(x,\bar{y}) + \frac{\psi (x)}{2 \psi(x^*)}(R(x^*, y_i + y_j ) - c(\theta -y_i - y_j))) \\
& &- \rho (R_{i}(x,\bar{y}) + \frac{\psi (x)}{2 \psi(x^*)}(R(x^*, y_i + y_j ) - c(\theta -y_i - y_j))) + axy_i\\
& = & \left( \mathcal{L}^{\bar{y}} - \rho \right)R_{i}(x,\bar{y}) + axy_i \\
& = & \frac{a\kappa(\zeta  -x)y_i }{\rho + \kappa} - \frac{\rho axy_i}{\rho + \kappa}  - \frac{ a \zeta \kappa y_i}{\rho + \kappa} + a x y_i = 0.
\end{eqnarray*}


\noindent Also, when $x < \bar c$ we should have $\frac{\partial w_i}{\partial y_i} -c \leq 0$, and in fact  
\begin{eqnarray*}
\frac{\partial w_{i}(x,y_i ,y_j ) }{\partial y_i} - c & = & \frac{a}{\rho + \kappa} \left(x + \frac{\xi \kappa}{\rho} \right) + \frac{\psi (x)}{2 \psi(x^*)} \left( - \frac{a}{\rho + \kappa} \left(x^* + \frac{\xi \kappa}{\rho} \right) + c \right) - c \\
& \leq & \left( \frac{a}{\rho + \kappa} \left(x + \frac{\xi \kappa}{\rho} \right) - c \right) \left(1 - \frac{\psi (x)}{2 \psi(x^*)} \right) = \\
& = & \frac{a}{\rho + \kappa} (x - \bar c) \left(1 - \frac{\psi (x)}{2 \psi(x^*)} \right) < 0
\end{eqnarray*}
as $\psi$ is strictly increasing. 





\textit{Case 2:} For player $i$, when $(x,\bar y) \in \mathbb{I}_j$ then also $(x,\bar y) \in \mathbb{I}_i$. 
We expect $ \frac{\partial w_i (x,y_i , y_j)}{\partial y_i} + \frac{\partial w_i (x,y_i , y_j)}{\partial y_j} - c = 0$: in fact, 


\begin{eqnarray*}
\sum_{k = i,j} \frac{\partial w_i (x,y_i , y_j)}{\partial y_k}  - c 
& = & \frac{\partial (R_i (x,y_i , y_j) - R(x,\theta - y_i - y_j)/2)}{\partial y_i} + \frac{c}{2} + \\
& & + \frac{\partial (R_i (x,y_i , y_j) - R(x,\theta - y_i - y_j)/2) }{\partial y_j} + \frac{c}{2} - c = \\
& = & \frac{ax}{(\rho +  \kappa)} + \frac{a \zeta \kappa}{\rho (\rho + \kappa)} - \frac{ax}{(\rho + \kappa)} - \frac{a \zeta \kappa}{ \rho (\rho + \kappa)} = 0.
\end{eqnarray*}

\noindent On the other hand, when $x \geq \bar c$ we also expect that $\frac{\partial w_i (x,y_i , y_j)}{\partial y_j} \leq 0$: in fact, 


\begin{eqnarray*}
\frac{\partial w_i (x,y_i , y_j)}{\partial y_j} & = & \frac{1}{2} \left( - \frac{ax}{\rho + \kappa} - \frac{a \zeta \kappa}{ \rho (\rho + \kappa)} + c \right) = 
\frac{a}{2 (\rho + \kappa)} (\bar c - x) \leq 0
\end{eqnarray*}




\end{proof}

\begin{remark} \label{nashbeforepareto}
Since, after Remark 4.1, we have $\bar c < \hat x$, this means that the search for a Nash equilibrium induces the agents to perform an earlier installation with respect to the cooperative behavior of the Pareto optimum seen in the previous section. This phenomenon is the converse of the one observed in \cite{CGX}, where instead the Nash equilibrium's action regions are contained in the Pareto optima's ones, i.e. agents wait more under the Nash equilibrium than under the Pareto optimum. By continuity, we expect a similar behavior also for the case $\beta > 0$, at least for low values of $\beta$: in other words, also in the case when price impact is present, competitive Nash equilibria will induce players to install earlier than when they would install under a cooperative Pareto optimum. We reserve to investigate this topic furtherly in future research.
\end{remark}




\color{black}
\section{Numerical verification}

In this section we solve numerically Equation \eqref{ODE}, using the parameters' values estimated in Section 3 for the North, Central North and Sardinia zones. 

Following the spirit of Section 5.1, we treat the pool of producers in each zone as a coalition maximizing the cumulative payoff and thus realizing a Pareto optimum. We choose not to report results about Nash equilibria, as the analysis in Sections 5.2 and 5.3 contains only partial results; however, after Remark \ref{nashbeforepareto}, we expect that a free boundary relative to a Nash equilibrium would always be located on the left of the Pareto optimum, which instead we explicitly describe below.

Recall from Table \ref{T1a} that the price impact  in the North zone is due to photovoltaic power production, while in Sardinia is due to wind power production. Both are cases when the parameter impact is $\beta > 0$, which we describe in Section 4.1.2. On the other hand, Central North has not price impact from power production (at least from these two renewable sources), so here we are in the case $\beta = 0$ described in Section 4.1.1.


The parameters $c$ and $a$ presented in \eqref{utility} should be selected according to the type of renewable energy which has an impact on the corresponding price zone. In the case of photovoltaic power we consider a yearly average of the installation cost of $1$ MW of the prices available in the market, see e.g. \cite{PV}. On the other hand, for the wind power installation cost we consider the invested money on an offshore wind park that is being installed in Sardinia \cite{W}. In both cases we adjust for 
the presence of government incentives for renewable energy installation (usually under the form of tax benefits), 
therefore we consider around a $40\%$ and a $60 \%$ of the real investment cost $c$ of photovoltaic and wind power, respectively, for our numerical simulation. The parameter $a$ is the effective power produced during a representative year: as we consider a yearly scale for simulation, the parameter $a$ will convert our weekly data of produced power into yearly effective produced power. Additionally, the $a$ value depends on the type of produced power. This information is available e.g. in \cite[Chapter 4]{EFV}.
The parameter $\rho$ is the discount factor for the electricity market and is the same in the three cases: no impact, photovoltaic and wind power impact. The parameter $\theta$ is the effective power that can be produced considering the real installed power of the respective type of energy.
In the case of the estimated parameters $\kappa$, $\zeta$, $\beta$ and $\sigma$, we choose a value from the $95\%$ confidence interval, based on better heuristic numerical performance simulation criteria.

We summarize in Table \ref{TS} the parameters considered for the numerical simulations.

\begin{table}[H]
\centering
\begin{tabular}{| c | c | c | c | c | c | c | c | c |}
\hline
Zone &  \multicolumn{8}{c |}{Parameters' values}\\
\cline{2-9}
& $\kappa$ & $\zeta$ &  $\beta$ & $\sigma$ & $c$ & $a$ & $\theta$ & $\rho$\\
\hline
North & 6.7 & 124.7 & 0.0091 &  47.7 & 290000 & 1400 &  6500 & 0.1\\
\hline
Central North & 5.6029 & 50.2381 & 0 & 58.9796 & 290000 & 1400 & 6500 & 0.1\\
\hline
Sardinia & 13.213 & 115.1565 & 0.0091 & 68.2889 & 1944400 & 7508 & 5700 & 0.1\\
\hline
\end{tabular}
\caption{Parameter values used for  the North, Central North and Sardinia zones.}
\label{TS}
\end{table}


For the Central North case, we consider the cost of photovoltaic installation, because it is the main renewable energy produced in this zone.

\subsection{North}

We solved the ordinary differential equation \eqref{ODE} using the data in Table \ref{TS} for the North, using the backward  Euler scheme with step $h = 0.5$ and initial condition 
$
\hat{F}(\theta) = 976.4 \mbox{ \euro/MWh},
$
\noindent which was obtained by solving Equation \eqref{Acca} with the bisection method considering as initial points the extremes of the interval on Remark \ref{R1}. The graph of the solution for the free boundary $F(y) = x$ is presented in Figure \ref{F1a}, with a detail on realized power prices in Figure \ref{F1b}.

\begin{figure}[H]
\begin{subfigure}[t]{0.45\textwidth}
\includegraphics[scale=0.4]{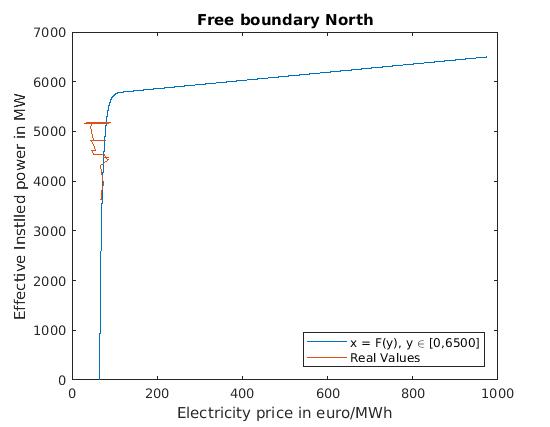}
\caption{Simulated free boundary and real data for the North}
\label{F1a}
\end{subfigure}%
\hfill
\begin{subfigure}[t]{0.45\textwidth}
\includegraphics[scale=0.4]{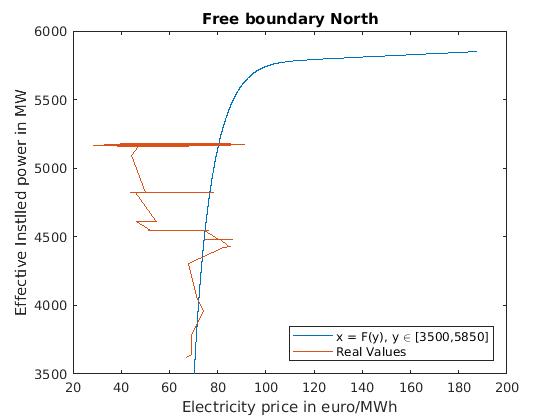}
\caption{Detail of free boundary and real data for the North}
\label{F1b}
\end{subfigure}
\end{figure}

In Figure \ref{F1a}, the point at zero installation level corresponds to $F(0) = 64.9$ \euro/MWh. The red irregular line corresponds to the realized trajectory $t \to (X(t),Y(t))$, i.e. to the values of electricity price vs effective photovoltaic installed power in the North: from it we can see that, at the beginning of the observation period (2012), the installed power\footnote{recall that $Y$ is really just an estimation of the installed power, which is officially given with yearly granularity; moreover, $Y$ is expressed in units of rated power, i.e. in production equivalent to a power plant always producing the power $Y$} was already around 3600 MW. 
Instead, the blue smooth line corresponds to the computed free boundary $F(y) = x$, which expresses the optimal installation strategy in the following sense: when the electricity price $S^x(t)$ is lower than $F(Y(t))$, i.e. when we are in the waiting region (see \eqref{wrF}), no installation should be done and it is necessary to wait until the price $S^x(t)$ crosses $F(Y(t))$ to optimally increase the installed power level. When the electricity price $S^x(t)$  is between $F(0)$ and $ F(\theta)$, enough power should be installed to move the pair price-installation in the up-direction 
until reaching the free boundary $F$. In the extreme case when $S^x(t) \geq F(\theta)$ the energy producer should install instantaneously the maximum allowed power $\theta$. In the detailed Figure \ref{F1b} we can observe the strategy followed in the North zone: the installation level from $3500$ MW until $4500$ MW was approximately optimal, in the sense that the pair price-installed power was around the free boundary $F$, with possibly some missed gain opportunities when, between 4300 and 4500 MW, the price was deep into the installation region; nevertheless, the rise in renewable installation from $4500$ MW to $4800$ MW was at the end done with a power price which resulted lower than what should be the optimal one. At around $4800$ MW, there was an optimal no installation procedure until the price entered again the installation region: 
again, the consequent installation strategy was executed with some delay, resulting in a non-optimal strategy. At the end of the installation (around 5200 MW), we can see that the pair price-installed power moved again deep into the installation region: we should then expect an increment in installation.

\subsection{Central North}

In this case we do not have price impact, hence the constant free boundary $F(y) \equiv \bar{x}$ was obtained solving Equation \eqref{xhat}. 
As before, we used the bisection method considering as  initial points the extremes of the interval described in Remark \ref{R1}. The obtained value is $
%
F(y) = \bar x = 29.3205 \mbox{ \euro/MWh}.
$
\begin{figure}[H]
\begin{subfigure}[t]{0.45\textwidth}
\includegraphics[scale=0.4]{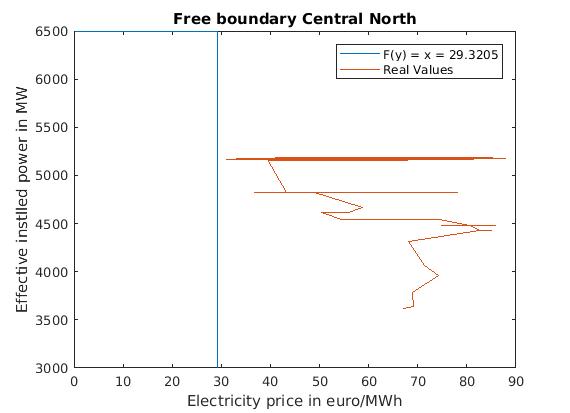}
\caption{Simulated free boundary and real data for Central North}
\label{F2a}
\end{subfigure}
\begin{subfigure}[t]{0.45\textwidth}
\includegraphics[scale=0.4]{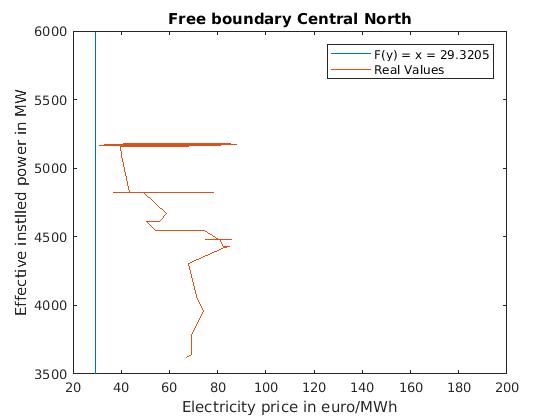}
\caption{Detail of free boundary and real data for Central North}
\label{F2b}
\end{subfigure}
\end{figure}

In Figure \ref{F2a} the vertical blue line corresponds to the constant free boundary $ \bar{x} = 29.3205$ \euro/MWh, while the red irregular line with the realized values of price-installation action that was put in place in the Central North zone. In this case, the optimal strategy is described as follows: for electricity prices less than $\bar{x}$, no increments on the installation level should be done. Conversely, when the electricity price is grater or equal to $\bar{x}$ the producer should increment the installation level up to the maximum level allowed for photovoltaic power (here we posed $\theta = 6500$ MW). As we can clearly see on Figure \ref{F2a}, the electricity price has always been greater than $\bar{x}$ in the observation period; however,  the increments on the installation level was not high enough to arrive to the maximum level $\theta = 6500$ MW, therefore the performed installation was not optimal.

\subsection{Sardinia}

As in the North case, we solved the differential equation \eqref{ODE} using the data in Table \ref{TS} for Sardinia, using the backward  Euler scheme with step $h = 0.2$ and initial condition $
%
\hat{F}(\theta) = 1453.3 \mbox{ \euro/MWh},
$
\noindent which was obtained by solving Equation \eqref{Acca} using the bisection method and considering as initial points the extremes of the interval in Remark \ref{R1}. The graph of the solution for the free boundary $F(y) = x$ is presented in Figure \ref{F3a}.

\begin{figure}[H]
\begin{subfigure}[b]{0.5\textwidth}
\includegraphics[scale=0.4]{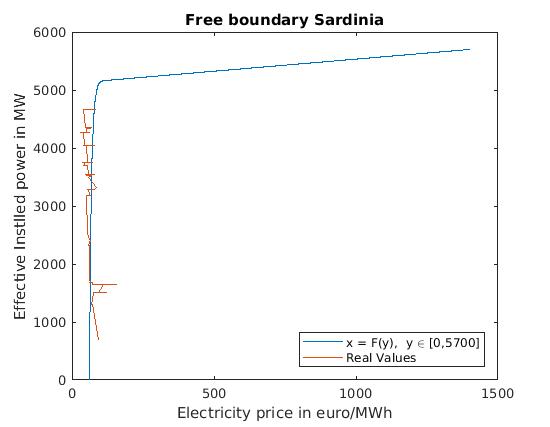}
\caption{Simulated free boundary and real data for Sardinia}
\label{F3a}
\end{subfigure}
\begin{subfigure}[b]{0.5\textwidth}
\includegraphics[scale=0.4]{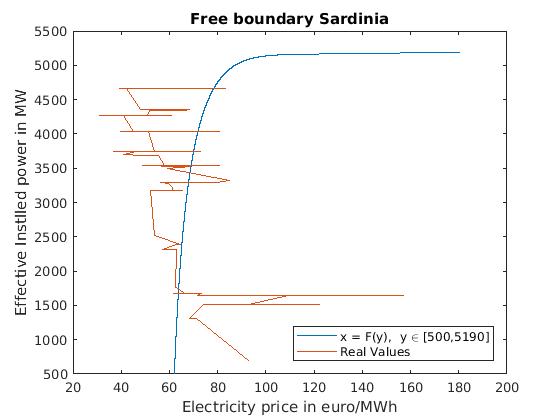}
\caption{Detail free boundary and real data for Sardinia}
\label{F3b}
\end{subfigure}
\end{figure}

In Figure \ref{F3a} the point at zero installation level corresponds to $F(0) = 61.5199$ \euro/MWh . The red irregular line corresponds with the realized values of electricity price vs effective wind installed power in Sardinia, from which we can see that the installed wind power at the beginning of the observation period was already around 600 MW.  The blue smooth line corresponds to the simulated free boundary $F(y) = x$, which expresses the optimal installation strategy as was already explained for the North case. In the detailed Figure \ref{F3b} we can observe the strategy followed in the Sardinia zone: until the level $1600$ MW the power price was very deeply into the installation region, but the installation increments were not high enough to be optimal. Optimality came between the levels $1600$ MW and $2400$ MW, where the performed strategy was to effectively maintain the pair price-installed power around the free boundary $F$. However, the subsequent increments were not optimal, in the sense that the installed power was often increased in periods where the electricity price was too low, and in other situations the power price entered deeply in the installation region without the installed capacity following that trend, or rather doing it with some delay. 

\subsection{Discussion}

We must start by saying that we did not expect optimality in the installation strategy. In fact, firstly this strategy has been carried out by very diverse market operators, including hundreds of thousands of private citizens mounting photovoltaic panels on the roof of their houses, thus not necessarily by rational agents which solved the procedure shown in Sections 4 and 5. Moreover, we must also say that renewable power plants like photovoltaic panels or wind turbines often meet irrational resistances by municipalities, especially when performed at an industrial level: more in detail, photovoltaic farms are perceived to "steal land" from agriculture (see e.g. \cite{DGLS}), while high wind turbines are generically perceived as "ugly" (together with many other perceived drawbacks, see the exaustive monography \cite{ChaCri} on this). 

Despite all these possible adverse effects we saw that, in the North and Sardinia price zones, part of the realized trajectory of power price and installed capacity was very near to the optimal free boundary, while in other periods the installation was put in place in moments when power price was not the optimal one --- possibly, the installation was planned when the power price was high and deep into the installation region (time periods like this have been described both in the North as in Sardinia, see Sections 6.1 and 6.3) but the installation was delayed by adverse effects like e.g. the ones described above. Summarizing, in these two regions the final installation level resulting at the end of the observation period (2018) seems consistent with the price levels reached during the period. 

It is instead difficult to reach such a conclusion in the Central North region: in fact, in that case the realized trajectory of power price and installed capacity was always deeply into the installation region, as the power price was always above the constant free boundary $F(y) = \bar x$ which resulted in this case: the optimal strategy should then have been to install immediately the maximum possible capacity. We did observe a rise in installed renewable power during the period, which was obviously not optimal in the execution time (which spanned several years), given the peculiar nature of the free boundary. However, in analogy to what already said for the North and Sardinia price zones, it is possible that the performed installation, which at the end took place during the observation period,  has been planned in advance but delayed by the same adverse effects cited above. 

\section{Conclusions}

We apply to real modeling and simulation the model presented in \cite{KV}, which assumes that the electricity price evolves accordingly to an O-U process and that it is affected by renewable power installation on the mean-reverting term. The original model considers one big company that influences the electricity price with its activities. To be more realistic, we also study the case when $N$ producers have an impact on electricity price by their aggregate installation. To solve this $N$-player 
game, we use a "social planner" approach as in \cite{ferrari} and maximize the aggregate utility of the $N$ producers: this approach produces Pareto optima, and brings the problem back to the one-producer case.  We also present an analysis which shows that, if we instead search for noncooperative Nash equilibria, we would obtain strategies where producer install earlier than when they would under a Pareto optimum. 

Using real data from the six main Italian price zones we found that, under an O-U model with an exogenous influence on the mean reverting term, there exists significant price impact of renewable power production in the North and Sardinia zones. Also we found that for the Central North price there is not renewable production impact on power price, which is well described by an O-U model without exogenous term. 

Once we solve numerically the ordinary differential equation for the free boundary or trigger frontier, which describes when it is optimal to increment the installed power, we compare it with the real installation strategy that was put in place in the North, Central North and Sardinia zones. We found that for the North the installation was optimal until the $4500$ MW level, while in Sardinia the installation was optimal between $1600$ MW and $2400$ MW level. On the other hand, the capacity expansion in Central North was executed but not in an optimal way, and the increment on the installation level should possibly be higher than what it was. We also present a discussion on this, stating some possible reasons why the installation has not been fully optimal.

\medskip

\noindent \textbf{Acknowledgments.} T. Vargiolu acknowledges financial support from the research grant BIRD 172407-2017 "New perspectives in stochastic methods for finance and energy markets" and BIRD 190200/19 "Term
structure dynamics in interest rates and energy markets: modelling and numerics", both of the
University of Padova. We wish to thank Giorgio Ferrari, Markus Fischer, Katarzyna Maciejowska and Rafa\l\ Weron for useful comments and suggestions.

\end{document}